\title[Steiner symmetrization]
{Steiner symmetrization along a certain
equidistributed sequence of directions in the plane}
\author{Reza Asad and Almut Burchard}
\address{University of Toronto, Department of Mathematics,
40 St. George Street, Toronto, Canada M5S 2E4.
{\tt almut@math.toronto.edu}}
\date{September 7, 2015}
\theoremstyle{plain}
\newtheorem*{theo*}{Main Result}
\newtheorem{lemma}{Lemma}
\theoremstyle{definition}
\newtheorem{defn}{Definition}
\newcommand{\R}{{\mathbb R}}
\newcommand{\C}{{\mathbb C}}
\newcommand{\Z}{{\mathbb Z}}
\newcommand{\calJ}{{\mathcal{J}}}
\renewcommand{\Re}{{\rm Re}\,}
\renewcommand{\Im}{{\rm Im}\,}
\newcommand{\Cc}{\mathcal{C}_c^+}
\begin{document}

\begin{abstract}
This note reports the results of an undergraduate research
project from the year 2013-14, concerning 
the convergence of iterated Steiner symmetrizations in the plane.
The directions of symmetrization are chosen according to
the Van der Corput sequence, a classical
example of a sequence that is equidistributed in $S^1$
with low discrepancy. It is shown here that the resulting
iteration of Steiner symmetrizations
converges to the symmetric decreasing rearrangement.
The proof exploits the self-similarity of the sequence
of angular increments, using the technique of 
competing symmetries.
\end{abstract} \maketitle

Symmetrizations are rearrangements of 
functions that are used in geometry and analysis. 
The most important one is the
symmetric decreasing rearrangement, which replaces a given
nonnegative function by an equimeasurable radially decreasing
function. Another example is
Steiner symmetrization, which is a simpler rearrangement
that produces a reflection symmetry.

In typical applications, the goal is to reduce a rotationally symmetric 
optimization problem to 
the radial case, which can be treated as a single-variable problem. 
To this end, one establishes suitable inequalities between a function 
and its symmetrization. Functionals involving the gradient
(such as the Dirichlet energy) generally
decrease under symmetrization, while convolution functionals 
(such as the Coulomb energy) 
increase. In many cases, one can argue that optimizers 
are invariant under Steiner symmetrization in all directions, and therefore 
radial. This strategy has been used to solve extremal problems
for the perimeter, capacity, torsional rigidity, 
fundamental frequency, and related quantities in geometry and 
mathematical physics~(see \cite{PSz1951}). Symmetrization techniques 
have been also used to determine sharp constants for
the Young, Sobolev, and Hardy-Littlewood-Sobolev 
inequalities of functional analysis~(see~\cite[Chapters 3 and 4] {LL2001}).

It is often useful to approximate the symmetric decreasing rearrangement
by concatenating Steiner symmetrizations along a suitable sequence
of directions.  For a random sequence of directions,
this almost surely converges to the symmetric decreasing 
rearrangement. 
What determines whether a given sequence of directions
produces convergence? 

This question has received some attention in the literature.
Convergence has been established for a number of explicit examples. 
On the other hand, 
there are sequences of directions that are dense,
and even equidistributed in the sense of Weyl, 
such that the corresponding sequence of
Steiner symmetrizations fails to converge. For a 
full discussion of the
state-of-the-art we refer to a recent paper of 
Bianchi {\em et al.}~\cite{BBGV}. The authors raise the 
question whether every sequence of directions that is
equidistributed and has low discrepancy 
gives rise to a sequence of Steiner
symmetrizations that converges to the symmetric decreasing 
rearrangement.

In this note, we consider the special case of two dimensions, 
and show that the {\em van der Corput sequence}, a classical example
of a sequence of low discrepancy in $S^1$, indeed produces convergence. 
Our proof takes advantage of the self-similar construction of this
sequence. 
The question remains open for general sequences of low discrepancy.

\smallskip 
We start with some definitions.
Let  $\Cc$ denote the set of nonnegative
continuous functions with compact support
in the complex plane. We work with the topology of uniform
convergence, defined by the distance function
$$
||f-g||=\sup_{z\in\C} |f(z)-g(z)|\,.
$$

\begin{defn} The {\em symmetric decreasing rearrangement} 
$f^*$ of a function $f \in \Cc$ is
the unique symmetric decreasing function that is equimeasurable 
with $f$, i.e.,
the level set $\{ z\in \mathbb{C} \vert f^*(z)>t\}$ has the same measure as the corresponding level
set of $f$ for each $t>0$ (see Figure~\ref{fig:f*}).
\end{defn}
\begin{figure}[h!]
\centering
\includegraphics[scale = 1.6]{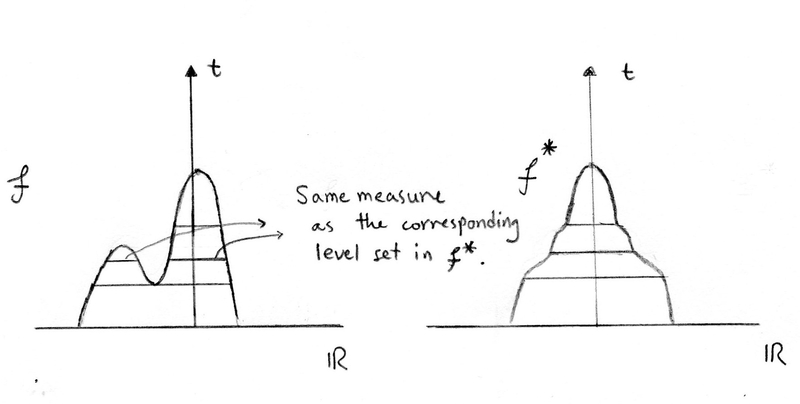}
  \caption{\small The symmetric decreasing rearrangement 
of a function $f\in\Cc$}
  \label{fig:f*}
\end{figure}

The function $f^*$ is again a nonnegative continuous function with 
compact support. In fact, the modulus of continuity of $f^*$ is bounded 
by the modulus of continuity of $f$, and its support is contained 
in the smallest centered circle that contains the support of $f$.
The symmetric decreasing rearrangement is {\em non-expansive},
i.e., 
$$
||f^*-g^*|| \le ||f-g||\qquad (f,g\in\Cc)\,,
$$
see~\cite[Ex. 1.7, Ex. 2.14]{B2009} 
and~\cite[Section 3.4]{LL2001}.

\begin{defn} \label{def:Su}
The {\em Steiner symmetrization}  of  a function $f\in\Cc$ is the function $Sf$ with the 
property that its restriction to each line $\Re z = t$ is the 
unique symmetric decreasing function of $\Im z$
that is equimeasurable with the corresponding restriction of $f$ 
(see Figure~\ref{fig:steiner}).
For  any angle $\alpha \in \R/(2\pi)$, the 
{\em Steiner symmetrization in direction $\alpha$} 
is given by $$
S_\alpha:=R_\alpha SR_{-\alpha}\,,
$$
where the rotation $R_\alpha$ acts on 
$\Cc$ by $R_\alpha f(z)=f(e^{-i\alpha}z)$ for $z\in\C$.
\end{defn}

\begin{figure}[h!]
\centering
\includegraphics[scale = 0.7]{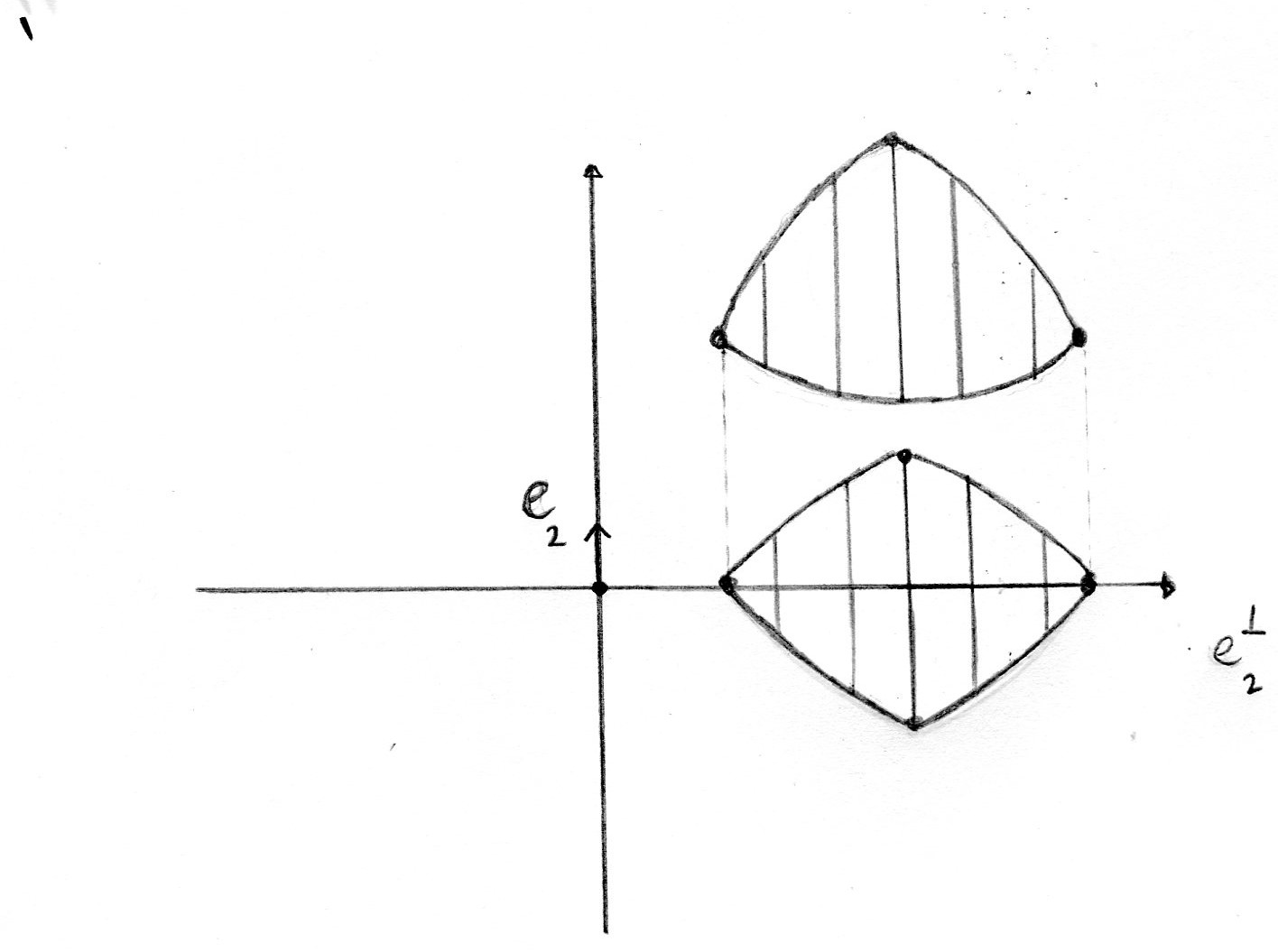}
  \caption{\small A level set of $f$ and the corresponding level set of $Sf$
}
  \label{fig:steiner}
\end{figure}
Similar to the symmetric decreasing rearrangement,
Steiner symmetrization defines a transformation on 
$\Cc$ that is non-expansive, with 
$$
\|Sf - Sg\| \le \|f-g\|\qquad (f,g\in\Cc)\,.
$$
A useful consequence is that $S_\alpha$ depends continuously on 
the direction~$\alpha$. We next choose the sequence
of directions.

\begin{defn}
The sequence  $\left(e^{i\theta_n}\right)_{n\ge 0}$ with arguments
$$
\theta_n=\pi \sum_{j=0}^k a_j2^{-j}
\qquad \mbox{for}\ n = \sum_{j=0}^k a_j2^j
$$
is called the {\em van der Corput sequence} on $S^1$  (see Figure~\ref{fig:VdC}).
\end{defn}

\begin{figure}[htbp]
    \centering
    \includegraphics[width = 0.3\linewidth]{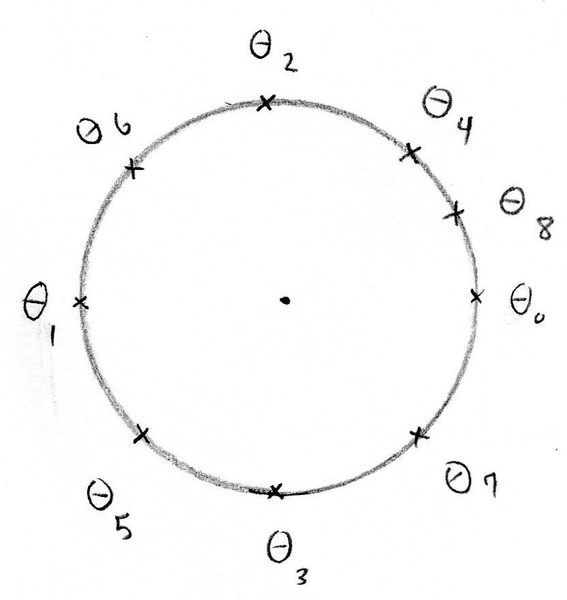}
    \caption{\small The Van der Corput Sequence on the unit circle.}
 \label{fig:VdC}
\end{figure}

In other words, if $n$ has binary expansion $(a_k\dots a_0)$,
then $\theta_n/(2\pi)$ has
binary expansion $(0.a_0 \dots a_k)$.
The van der Corput sequence has the equidistribution property that
the fraction of points $(e^{i\theta_n})_{n\le N}$ 
that fall into any given arc $A$ converges to its length 
$\ell(A)$ as a fraction of $2\pi$ when $N\to\infty$. The 
{\em discrepancy}
$$ 
D_N = \sup_{A \in S^1} \left|\frac {\mbox{\rm \#} 
\{ n<N:e^{i\theta_n} \in A\}}{N} - \frac {\ell(A)}{2\pi}\right|, 
$$
measures how quickly the sequence $(e^{i\theta_n})$ converges to
the uniform distribution on $S^1$. According to a theorem
of Roth, the discrepancy of every sequence
must exceed a constant multiple of $N^{-1}\log N$
for infinitely many $N$. The discrepancy of the
van der Corput sequence is bounded from above
and below by constant multiples of the optimal order 
$N^{-1}\log N$ (for all $N$). Sequences with this property
are said to have {\em low discrepancy}. For more information about 
equidistributed sequences, we refer to~\cite{KN1974}.

\smallskip

\begin{theo*}\label{theo:main}
Let $(e^{i\theta_n})$ be the van der Corput sequence on the 
unit circle, let $f\in \Cc$, and let $f^*$ be its symmetric
decreasing rearrangement. Then the sequence $(f_n)$
defined by
\begin{equation} \label{eq:def-fn}
f_0=S_{\theta_0}f\,,\qquad 
f_n := S_{\theta_n}f_{n-1} \quad (n\ge 1)
\end{equation}
converges uniformly to $f^*$.
\end{theo*}

The proof requires some auxiliary results.
To keep track how much a function $g\in\Cc$
differs from its symmetric decreasing rearrangement,
we use the functional
\begin{equation} \label{eq:def-J}
\calJ(g)=\int_{\C} g(z)\, e^{-|z|^2}\, d^2z\,.
\end{equation}
Clearly, $\calJ$ is continuous on $\Cc$.  
The first lemma shows that Steiner symmetrizations and rotations
play the roles of {\em competing symmetries}
for this functional~\cite{CL1990}.

\begin{lemma}\label{lem:HL}
Let $g \in \Cc$ and $\alpha\in \R/(2\pi)$. Then
$$
\calJ(R_\alpha g)= \calJ(g)\quad\mbox{and}\quad
\calJ(S_\alpha g)\ge \calJ(g)\,.
$$ 
The inequality is strict unless $S_\alpha g=g$. 
\end{lemma}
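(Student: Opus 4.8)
The plan is to reduce everything to one dimension and then to the classical Hardy--Littlewood inequality. First, the rotation identity $\calJ(R_\alpha g)=\calJ(g)$ is immediate: the weight $e^{-|z|^2}$ is rotation invariant, and Lebesgue measure $d^2z$ is rotation invariant, so the change of variables $z\mapsto e^{i\alpha}z$ gives the claim. Since $S_\alpha=R_\alpha S R_{-\alpha}$ and $\calJ$ is rotation invariant, it suffices to prove $\calJ(Sg)\ge\calJ(g)$, with equality iff $Sg=g$, for the plain Steiner symmetrization $S$ in the vertical direction.

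For the Steiner inequality I would write $\calJ$ as an iterated integral, integrating first in the direction of symmetrization. Setting $z=s+it$, Fubini gives
\begin{equation*}
\calJ(g)=\int_{\R} e^{-s^2}\left(\int_{\R} g(s+it)\,e^{-t^2}\,dt\right)ds\,.
\end{equation*}
For each fixed $s$, the inner integral is a one-dimensional integral of the product of the slice $t\mapsto g(s+it)$ against the fixed function $t\mapsto e^{-t^2}$. By definition of $S$, the slice $t\mapsto Sg(s+it)$ is the symmetric decreasing rearrangement (in $t$) of $t\mapsto g(s+it)$, and $e^{-t^2}$ is already symmetric decreasing in $t$, hence equal to its own rearrangement. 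The one-dimensional Hardy--Littlewood inequality then yields
\begin{equation*}
\int_{\R} g(s+it)\,e^{-t^2}\,dt\le\int_{\R} Sg(s+it)\,e^{-t^2}\,dt
\end{equation*}
for every $s$. Integrating against the positive weight $e^{-s^2}\,ds$ gives $\calJ(g)\le\calJ(Sg)$.

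It remains to analyze the equality case, which I expect to be the main (though still routine) obstacle. If $\calJ(Sg)=\calJ(g)$, then by the displayed inequality the one-dimensional Hardy--Littlewood inequality must be an equality for almost every $s$, hence — by continuity of $g$ and the fact that $e^{-t^2}$ is \emph{strictly} decreasing — for every $s$. The key point is that equality in Hardy--Littlewood against a strictly decreasing weight forces the slice itself to be symmetric decreasing: if $t\mapsto g(s+it)$ were not already symmetric decreasing, one could find a ``rearrangement move'' strictly increasing the pairing, contradicting equality. (I would invoke the characterization of equality in the Hardy--Littlewood/bathtub inequality from \cite{LL2001} or \cite{B2009}, applied slice by slice; the strict monotonicity of the Gaussian weight is exactly what makes the equality condition rigid.) Therefore $g(s+it)=Sg(s+it)$ for all $s$ and all $t$, i.e., $Sg=g$. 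Conjugating back by $R_\alpha$ gives the stated strictness for $S_\alpha$, completing the proof.
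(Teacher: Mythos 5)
Your proposal is correct and follows essentially the same route as the paper: Fubini in the direction of symmetrization, the one-dimensional Hardy--Littlewood inequality against the strictly decreasing Gaussian weight on each slice (with its equality case, as in \cite[Theorem 3.4]{LL2001}), continuity to upgrade ``almost every slice'' to ``every slice,'' and rotation invariance of $\calJ$ to pass between $S$ and $S_\alpha$. No substantive differences.
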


\begin{proof} 
The rotational invariance of $\calJ$ follows directly
from the fact that the Gaussian $e^{-|z|^2}$
is radial. For the second claim, we use Fubini's theorem to write
$$
\calJ(S g) = \int_\R \left(\int_\R Sg(x+iy)\,e^{-y^2}\, dy
\right)e^{-x^2}dx\,.
$$
According to
Definition~3, the vertical cross section of $Sg$ 
at any given $x$ is the one-dimensional symmetric decreasing
rearrangement of the corresponding cross section of $g$.
By~\cite[Theorem 3.4]{LL2001}, the inner integral satisfies
$$
\int_\R Sg(x+iy)\,e^{-y^2}\, dy \ge \int_\R g(x+iy)\, e^{-y^2}\, dy\,,
$$
with equality if and only if the integrands agree almost 
everywhere. Since $g$ is continuous, this means that
$\calJ(S g) \ge \calJ(g)$, with equality if and only 
if $Sg=g$. The claim for $S_\alpha$ then follows from the
rotational invariance of~$\calJ$.  
\end{proof}

\smallskip 
The next lemma says that the consecutive differences in the
van der Corput sequence repeat themselves infinitely often. 
The convergence of similarly repetitive sequences of 
symmetrizations was analyzed 
by Van Schaftingen~\cite{vS2006u} and by Klain~\cite{K2012}.

\begin{lemma} \label{lem:selfsimilar}
Let $(e^{i\theta_n} )$ be the van der Corput sequence on 
the unit circle. The differences $\gamma_n = \theta_n - \theta_{n-1}$ 
satisfy
$$
\gamma_{2^{j}+n}  = \gamma_n\qquad (1\le n < 2^j)
$$
for all $j\ge 1$. Moreover, $\displaystyle{\lim_{j\to\infty} \gamma_{2^j}=0}$
modulo $2\pi$.
\end{lemma}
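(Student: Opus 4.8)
The plan is to work directly from the defining formula $\theta_n = \pi\sum_{j\geq 0} a_j 2^{-j}$ where $n = \sum_{j\geq 0} a_j 2^j$, i.e. $\theta_n/(2\pi)$ is the bit-reversal of $n$. First I would establish the periodicity relation $\gamma_{2^j+n} = \gamma_n$ for $1\leq n < 2^j$. The key observation is that for $0 \leq n < 2^j$, the binary expansion of $2^j + n$ is obtained from that of $n$ by setting the $j$-th bit to $1$ (the lower bits $a_0,\dots,a_{j-1}$ being unchanged, since $n < 2^j$). Under bit-reversal into the interval $[0,2\pi)$, that extra $1$ in position $j$ contributes a fixed amount $\pi\cdot 2^{-j}$ to $\theta$. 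Hence $\theta_{2^j+n} = \theta_n + \pi 2^{-j}$ for every $0\leq n < 2^j$. Taking the difference of this identity at $n$ and at $n-1$ (both of which lie in $[0,2^j)$ when $1\leq n < 2^j$) gives $\gamma_{2^j+n} = \theta_{2^j+n} - \theta_{2^j+n-1} = \theta_n - \theta_{n-1} = \gamma_n$, which is the claimed relation.

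For the second statement, I would compute $\gamma_{2^j} = \theta_{2^j} - \theta_{2^j-1}$ explicitly. Here $2^j$ has binary expansion $(1\,\underbrace{0\cdots0}_{j})$, so bit-reversal gives $\theta_{2^j}/(2\pi) = 2^{-(j+1)}$, hence $\theta_{2^j} = \pi 2^{-j}$. On the other hand $2^j - 1$ has binary expansion $(\underbrace{1\cdots1}_{j})$ in positions $0$ through $j-1$, which is a palindrome, so $\theta_{2^j-1} = \pi\sum_{i=0}^{j-1} 2^{-i} = \pi(2 - 2^{-(j-1)})$. Therefore $\gamma_{2^j} = \pi 2^{-j} - \pi(2 - 2^{1-j}) = \pi 2^{-j} + \pi 2^{1-j} - 2\pi = 3\pi 2^{-j} - 2\pi \equiv 3\pi 2^{-j} \pmod{2\pi}$, which tends to $0$ modulo $2\pi$ as $j\to\infty$.

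The only mild subtlety—and the step I would be most careful about—is the bookkeeping with the binary expansions and the passage modulo $2\pi$: the raw sum $\theta_{2^j-1} = \pi(2-2^{1-j})$ exceeds $2\pi$ only in the trivial sense that $\theta_n$ is defined as an element of $\R/(2\pi)$, and one must check that adding $\pi 2^{-j}$ when flipping the $j$-th bit is genuinely an identity in $\R/(2\pi)$ for the relevant indices (it is, since that term is simply a new summand in the defining series). Once the identity $\theta_{2^j+n} = \theta_n + \pi 2^{-j}$ is in hand for $0\leq n<2^j$, everything else is immediate arithmetic. I do not anticipate any genuine obstacle; the lemma is essentially a restatement of the recursive structure of the sequence.
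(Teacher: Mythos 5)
Your proposal is correct and follows essentially the same route as the paper: both rest on the identity $\theta_{2^j+n}=\theta_n+\pi 2^{-j}$ for $0\le n<2^j$ (which the paper obtains by writing out the binary expansions of $n$ and $n-1$ and subtracting) and on the explicit computation $\gamma_{2^j}=3\pi 2^{-j}-2\pi\equiv 3\pi 2^{-j}\pmod{2\pi}$. Isolating the shift identity before differencing, as you do, is a slightly cleaner organization of the same argument.
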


\begin{proof}
We expand
$n=\sum_{i<j} a_i2^{-i}$ and $n-1=\sum_{i<j} b_i2^{-i}$ to obtain
$$
\theta_{2^{j}+n} = \pi\left(2^{-j} + \sum_{i=0}^{j-1} a_i2^{-i}\right)\,,\quad  
\theta_{2^{j}+n-1} = \pi\left(2^{-j} + \sum_{i=0}^{j-1} b_i2^{-i}\right)\,.
$$
The difference is given by
\begin{align*}
\theta_{2^{j}+n} - \theta_{2^{j}+n-1}
&=\pi\left( \sum_{i=0}^{j-1} a_i2^{-i} - \sum_{i=0}^{j-1} b_i2^{-i}\right)\\
& = \theta_n - \theta_{n-1}\,,
\end{align*}
proving the first claim.
We similarly compute
\begin{align*}
\theta_{2^j} - \theta_{2^j - 1}
&= \pi\left(2^{-j} - \sum_{i = 0}^{j-1} 2^{-i} \right)
\\
&= -2\pi + \frac{3\pi} {2^j},
\end{align*}
proving the second claim.
\end{proof}

The final lemma gives  a  characterization of radially decreasing functions.

\begin{lemma}
\label{lem:identify} 
Let $h\in\Cc$. If $Sh=h$, and $S_{\alpha_j}h=h$ for some sequence
$(\alpha_j)$ of non-zero angles in $\R/(2\pi)$
that converges to zero, then $h=h^*$.
\end{lemma}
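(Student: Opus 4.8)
The plan is to convert the two operator identities into pointwise symmetry and monotonicity properties of $h$, and then to use that the angles $\alpha_j$ cluster at $0$ to upgrade a sequence of reflection symmetries into full rotational symmetry.

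First I would read off what $Sh=h$ says concretely. By Definition~\ref{def:Su}, the restriction of $Sh$ to each vertical line $\Re z=t$ is the one-dimensional symmetric decreasing rearrangement of the restriction of $h$; such a function is even in $\Im z$ and non-increasing in $|\Im z|$. Hence $Sh=h$ yields two facts: first, $h(\bar z)=h(z)$, i.e.\ $h$ is invariant under the reflection $\sigma_0$ across the real axis; and second, on every vertical line $h$ is non-increasing in $|\Im z|$. Conjugating by $R_{\alpha_j}$, the identity $S_{\alpha_j}h=h$ says that $R_{-\alpha_j}h$ is fixed by $S$, hence invariant under $\sigma_0$, so that $h$ is invariant under $\sigma_j:=R_{\alpha_j}\sigma_0 R_{-\alpha_j}$, the reflection across the line through the origin at angle $\alpha_j$.

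Next I would compose reflections. One checks directly that $\sigma_j\circ\sigma_0$ is the rotation by $2\alpha_j$, so $R_{2\alpha_j}h=h$ for every $j$. Since $\alpha_j\to 0$ and $\alpha_j\neq 0$, the angles $2\alpha_j$ (mod $2\pi$) are nonzero for all large $j$ and tend to $0$; consequently, for $j$ large their integer multiples are $\e$-dense in $\R/(2\pi)$. Given any $\beta\in\R/(2\pi)$, choosing integers $k_j$ with $k_j\,2\alpha_j\to\beta$ and using $R_{k_j2\alpha_j}h=R_{2\alpha_j}^{k_j}h=h$ together with the continuity of $h$ (which makes $\beta\mapsto R_\beta h$ continuous), I conclude $R_\beta h=h$. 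Thus $h$ is radial: $h(z)=\rho(|z|)$ for a continuous function $\rho$ of compact support in $[0,\infty)$.

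It remains to combine radiality with monotonicity: the restriction of $h$ to the line $\Re z=0$ is $y\mapsto\rho(|y|)$, which by the second fact above is non-increasing in $|y|$, so $\rho$ is non-increasing on $[0,\infty)$. Then $h$ itself is a symmetric decreasing function equimeasurable with $h$, and by the uniqueness in the definition of the symmetric decreasing rearrangement, $h=h^*$. I expect the only (modest) obstacles to be notational bookkeeping---keeping straight which reflection each operator identity produces and verifying that $\sigma_j\circ\sigma_0$ is rotation by $2\alpha_j$---and the elementary observation that a set of rotations containing arbitrarily small nonzero rotations generates a dense subgroup of $\R/(2\pi)$; once that is in hand, continuity of $h$ does the rest, and no compactness or measure-theoretic input beyond continuity is needed.
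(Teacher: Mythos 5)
Your proof is correct. It agrees with the paper's argument at both ends: you extract from $Sh=h$ and $S_{\alpha_j}h=h$ the reflection symmetries across the real axis and across the lines at angles $\alpha_j$, compose pairs of reflections to get invariance under the rotations $R_{2\alpha_j}$, and at the end combine radiality with the monotonicity on the vertical line $\Re z=0$ forced by $Sh=h$. Where you genuinely diverge is the middle step, from invariance under the $R_{2\alpha_j}$ to full radiality. The paper expands $h(re^{i\phi})=\sum_m a_m(r)e^{im\phi}$ and compares Fourier coefficients, obtaining $a_m(r)\bigl(1-e^{-2im\alpha_j}\bigr)=0$; since for each $m\ne 0$ one can choose $j$ with $2m\alpha_j\notin 2\pi\Z$, every non-constant mode vanishes. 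You instead note that arbitrarily small non-zero rotations have $\e$-dense sets of integer multiples in $\R/(2\pi)$ for every $\e>0$, and use continuity of $h$ (hence of $\beta\mapsto R_\beta h$) to pass to the closure and conclude $R_\beta h=h$ for all $\beta$. Both are sound; your handling of the one delicate point (that $2\alpha_j$ could vanish mod $2\pi$ even when $\alpha_j\ne 0$, which is excluded for large $j$ since $\alpha_j\to 0$) is correct. Your density argument is more elementary in that it avoids Fourier analysis, and it is also the more general one: it only needs the rotations $R_{2\alpha_j}$ to generate a dense subgroup of $SO(2)$, which is exactly the mechanism behind the paper's remark that the lemma still holds for any sequence $(\alpha_j)$ assuming infinitely many distinct values. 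The Fourier argument buys a cleaner finish --- no approximation step and no appeal to continuity of $\beta\mapsto R_\beta h$ --- at the cost of invoking uniqueness of Fourier coefficients for continuous functions.
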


\begin{proof} By assumption, $h$ is symmetric
under reflection at the real axis and at
each of the lines $z=te^{i\alpha_j}$. 
Since the composition
of a pair of reflections equals the rotation by twice the enclosed angle,
$h$ is symmetric under each of the rotations $R_{2\alpha_j}$. 
Expanding $h$ and $R_{2\alpha_j}h$
in polar coordinates as a Fourier series,
$$
h(re^{i\phi})= \sum_{m\in\Z} a_m(r) e^{im\phi}\,,
\qquad 
R_{2\alpha_n}h(re^{i\phi})
= \sum_{m\in\Z} a_m(r)e^{im(\phi-2\alpha_j) }\,,
$$
we see upon comparing coefficients that
$$
a_m(r)\bigl(1-e^{-2im\alpha_j}\bigr) =0
$$
for all integers $m$, all $r\ge 0$, and every index $j$.
Since each $\alpha_j\ne 0$ but $\lim \alpha_j=0$,
for any given $m$ we can choose $j$ so large that
the exponent $2m\alpha_j$ is not an integer multiple of $2\pi$,
and consequently the factor $(1-e^{-2im\alpha_j})$ does not vanish.
Therefore $a_m(r)$ must vanish identically for each $m\ne 0$,
and $h$ is radial. Finally, since $Sh=h$, 
the restriction of $h$ to the imaginary axis is symmetric decreasing, 
and we conclude that $h=h^*$. 
\end{proof}

\smallskip We note in passing that the conclusion of Lemma~\ref{lem:identify} remains valid, if $(\alpha_n)$ is {\em any} sequence
(not necessarily converging to zero) that assumes infinitely
many distinct values in $\R/(2\pi)$. The reason is that
the corresponding reflections generates a
dense subgroup of $O(2)$~\cite{L1967}.

\smallskip 
\begin{proof} [Proof of the main result.]
Let  $(e^{i\theta_n})$ be the van der Corput sequence on 
$S^1$, and let $(f_n)$ be the sequence constructed from 
$f$ by iterated Steiner symmetrization, as in
Eq.~(\ref{eq:def-fn}). Then $g_n=R_{-\theta_n} f_n$ satisfies
the recursion relation
\begin{equation}\label{eq:recursion}
g_n= S R_{-\gamma_n}g_{n-1}\,\qquad (n\ge 1)\,,
\end{equation} 
where $\gamma_n = \theta_n- \theta_{n-1}$.
Since Steiner symmetrization improves the 
modulus of continuity, the sequence $(g_n)$ is equicontinuous,
uniformly bounded, and supported on a common ball.
By the Arzel\`a-Ascoli theorem, there exists a subsequence 
$(g_{n_k})$ that converges uniformly to some function $h$. 
We want to show that the entire sequence converges, and
that $h=f^*$.

Let $\calJ$ be the functional defined in Eq.~\eqref{eq:def-J}.
By continuity, $\calJ(g_{n_k})$ converges to $\calJ(h)$. 
By Lemma~\ref{lem:HL}, the value of $\calJ$ 
increases monotonically along $(g_n)$, hence 
$$\lim_{n\to\infty} \calJ(g_n) =\sup_n\calJ(g_n)\\
=\calJ(h)
$$
along the entire sequence.

By construction, $Sh=h$. Fix $n\ge 1$.
By Lemma~\ref{lem:selfsimilar} and Eq.~\eqref{eq:recursion},
\begin{align*}
SR_{-\gamma_n}g_{2^j+n-1} &= SR_{-\gamma_{2^j+n}}g_{2^j+n-1}\\
&= g_{2^j+n}
\end{align*}
for every $j$ with $2^j>n$;
in particular, $\calJ(SR_{-\gamma_n}g_{2^j+n-1}) = \calJ(g_{2^j+n})$. Taking $j\to\infty$ yields that
$$\calJ(SR_{-\gamma_n}h) = \calJ(h)\,.
$$
It follows from the rotational invariance of $\calJ$
that $\calJ(S_{\gamma_n}h)= \calJ(h)$,
and from the second part of Lemma~\ref{lem:HL} that 
$$
S_{\gamma_n}h=h \qquad (n\ge 1)\,.
$$
We use this equation with $n=2^j$. 
By Lemma~\ref{lem:selfsimilar},
$(\gamma_{2^j})$ converges to zero in $\R/(2\pi)$, and by 
Lemma~\ref{lem:identify}, $h$ is radially decreasing.
Let $(g_{n_k})$ be the convergent subsequence chosen in the
first part of the proof.
Since $h^*=h$ and $(g_{n_k})^*=f^*$ for each~$n$, 
the non-expansive property of rearrangements implies that 
$$
 ||h-f^*||\le ||h-g_{n_k}||
$$
for each $k$. We finally take $k\to\infty$ and conclude that
$h=f^*$. By monotonicity, $||f^*-g_n||$ converges to
zero along the entire sequence, and by
the rotational invariance of the norm
$||f^*-f_n||$ converges to zero as well. 
\end{proof}

\section*{Acknowledgment} This work was supported in 
part by NSERC though a USRA summer project (R.A.) and
a Discovery grant (A.B.)

\bibliographystyle{plain}

\end{document}